\documentclass[12pt,reqno,twoside]{amsart}

\usepackage[T1]{fontenc}
\usepackage[utf8]{inputenc}

\usepackage[a4paper,inner=1in,outer=1in,top=1.2in,bottom=1.4in]{geometry}

\usepackage{amsmath,amssymb,amsthm,mathtools}
\usepackage{bm}
\usepackage{mathrsfs}
\usepackage{dsfont}

\usepackage{graphicx}
\usepackage{tikz}
\usepackage{caption}
\usepackage{subcaption}
\usepackage{enumitem}
\usepackage{cite}

\usepackage[
  colorlinks=true,
  linkcolor=blue!50!black,
  citecolor=green!40!black,
  urlcolor=magenta!60!black
]{hyperref}

\usepackage[nameinlink,capitalize]{cleveref}

\numberwithin{equation}{section}

\theoremstyle{plain}
\newtheorem{theorem}{Theorem}[section]
\newtheorem{lemma}[theorem]{Lemma}

\theoremstyle{definition}

\theoremstyle{remark}
\newtheorem{remark}[theorem]{Remark}
\newtheorem{example}[theorem]{Example}

\title[Semilinear Elliptic Inequalities on Weighted Graphs]
{A Capacitary Approach to Semilinear Elliptic Inequalities with Potentials on Weighted Graphs}

\author[M. Jleli]{Mohamed Jleli}
\address{Mohamed Jleli:  Department of Mathematics, College of Science, King Saud University, Riyadh 11451, Saudi Arabia}
\email{jleli@ksu.edu.sa}

\author[B. Samet]{Bessem Samet}
\address{Bessem Samet: Department of Mathematics, College of Science, King Saud University, Riyadh 11451, Saudi Arabia}
\email{bsamet@ksu.edu.sa}

\date{}



\begin{document}

\begin{abstract}
We develop a capacitary approach to semilinear elliptic inequalities on weighted
graphs with a potential. More precisely, we study the nonexistence of nontrivial
nonnegative solutions of
\[
\Delta u+w(x)u+v(x)u^\sigma\le0
\qquad\text{in }V,
\]
where \((V,\omega,\mu)\) is a connected, locally finite weighted graph,
\(\Delta\) is the associated graph Laplacian, \(\sigma>1\), \(v>0\), and
\(w\) is a real-valued potential. The potential term is handled by means of a
positive solution \(H\) of \(\Delta H+wH=0\), which transforms the operator
\(\Delta+w\) into the \(H\)-Laplacian associated with a new weighted graph. Our
main nonexistence criterion is formulated directly in terms of cut-off
functions and the regions where their \(H\)-Laplacian is controlled. Unlike
metric criteria based on pseudo-metric annuli, our formulation determines the
capacitary sets from the support of the \(H\)-Laplacian estimates for the
cut-off functions. We provide an example showing that our result applies in
situations not covered by previous nonexistence criteria based on structural
lower bounds or pseudo-metric annular volume estimates. We also show that the
growth exponent in our capacitary condition is sharp by constructing an example
for which the condition fails by an arbitrary power \(R^\varepsilon\), while a
positive nontrivial solution exists.
\end{abstract}

\subjclass[2020]{35R02, 35J61, 35A01, 39A12, 05C63}

\keywords{Weighted graphs; semilinear elliptic inequalities; nonexistence; capacitary estimates;  potential; \(H\)-transform}

\maketitle

\section{Introduction}

In this work, we study semilinear elliptic inequalities on weighted graphs.
More precisely, we consider problems of the form
\begin{equation}\label{P}
\Delta u+w(x)u+v(x)u^\sigma\le 0
\qquad \text{in } V,
\end{equation}
where \((V,\omega,\mu)\) is a connected, locally finite weighted graph,
\(\Delta\) is the associated graph Laplacian, \(w\) is a real-valued potential,
\(v\) is a positive weight, and \(\sigma>1\). Our main purpose is to establish
nonexistence results for nontrivial nonnegative solutions of \eqref{P} and to
investigate the sharpness of the obtained conditions.

The question of nonexistence for elliptic inequalities has a long history. In
the Euclidean setting, a prototype problem corresponding to \(w\equiv0\) is
\begin{equation}\label{P-E}
\Delta u+v(x)u^\sigma\le 0
\qquad\text{in } \mathbb R^N.
\end{equation}
In the particular case \(v\equiv1\) and \(N\ge3\), it is well known that the
only nonnegative solution of \eqref{P-E} is the trivial one whenever
\[
1<\sigma\le \frac{N}{N-2}.
\]
This result goes back to the classical Liouville-type theorems of Gidas and
Gidas--Spruck \cite{Gidas1980,GidasSpruck1981}. 
A major development in this direction was the nonlinear capacity method of
Mitidieri and Pohozaev~\cite{MitidieriPohozaev1998,MitidieriPohozaev1999,MitidieriPohozaev2004}. Their approach relates integral estimates obtained from
suitable test functions to capacitary quantities and leads to nonexistence
criteria for large families of elliptic inequalities in \(\mathbb R^N\). 
In particular, they proved that \eqref{P-E} admits no nontrivial nonnegative
solution, provided that
\[
\liminf_{R\to\infty}
R^{-\frac{2\sigma}{\sigma-1}}
\int_{B_{\sqrt{2}R}\setminus B_R}
v^{-\frac{1}{\sigma-1}}\,dx
<\infty.
\]

Elliptic inequalities have also been studied on Riemannian manifolds, where the
geometry of the underlying space plays a central role. In this setting, volume
growth, curvature assumptions, and capacity estimates provide natural substitutes
for the Euclidean dimension. Several nonexistence results have been obtained for
semilinear elliptic inequalities on complete noncompact manifolds, often by
combining test-function arguments with geometric information on geodesic balls
and annuli. In particular, for the manifold counterpart of \eqref{P-E}, we refer to \cite{GrigoryanKondratiev2010,GrigoryanSun2014,GrigoryanSunVerbitsky2020,MastroliaMonticelliPunzo2015,Sun2014}.   Further results on evolution inequalities in the setting of Riemannian manifolds can be found in \cite{GuSunXiaoXu2020,JleliSamet2023,JleliSametVetro2023,MastroliaMonticelliPunzo2017,MonticelliPunzoSquassina2020,WeiZhang2024}.

Recently, the study of elliptic inequalities has also been extended to weighted graphs; see, for instance,\cite{BiagiMeglioliPunzo2024,GrigoryanLinYang2016,GuHuangSun2023,HaoSun2023,HuangKellerMasamuneWojciechowski2013,MeglioliPunzo2025,MinhQuyetDuong2025,MonticelliPunzoSomaglia2025}. 
 This discrete setting provides a natural framework in which analytic features of elliptic equations interact with the combinatorial and metric structure of the underlying graph. In this context, volume growth, intrinsic metrics, and capacity-type quantities play roles analogous to those played by geodesic balls and capacities on Riemannian manifolds. Nonexistence results for semilinear elliptic inequalities on weighted graphs have therefore been developed under suitable geometric and capacitary assumptions.

The case \(v\equiv1\) and \(w\equiv0\) in \eqref{P} was studied in
\cite{GuHuangSun2023} under the structural condition
\begin{equation}\label{p0-cd}
\frac{\omega_{xy}}{\mu(x)}\ge \frac{1}{p_0},
\end{equation}
for some \(p_0>1\) and for all adjacent vertices \(x,y\in V\). In particular,
it was proved that, if
\[
\mu(B_R(x_0))
=
O\left(R^{\frac{2\sigma}{\sigma-1}}(\ln R)^{\frac{1}{\sigma-1}}\right)
\qquad\text{as } R\to\infty,
\]
then every nonnegative solution of
\[
\Delta u+u^\sigma\le0
\qquad\text{in } V
\]
is identically zero. Here \(B_R(x_0)\subset V\) denotes the ball with center
\(x_0\in V\) and radius \(R>0\), defined with respect to the graph distance,
and
\[
\mu(B_R(x_0))=\sum_{x\in B_R(x_0)}\mu(x).
\]

In \cite{MonticelliPunzoSomaglia2025}, the case \(w\equiv0\) in \eqref{P},
namely,
\begin{equation}\label{P0}
\Delta u+v(x)u^\sigma\le0
\qquad\text{in } V,
\end{equation}
was studied. More precisely, the authors did not require condition
\eqref{p0-cd} and worked with a general pseudo-metric on \(V\), rather than
restricting the analysis to the graph distance. Their nonexistence result was
obtained under an upper bound for the Laplacian of the distance function
\(d(\cdot,x_0)\), where \(x_0\in V\) is fixed, together with a weighted volume
growth condition on annuli centered at \(x_0\).

For our purposes, the nonexistence result stated in Theorem~3.1 of
\cite{MonticelliPunzoSomaglia2025} can be summarized as follows.

\begin{theorem}\label{T-Punzo}
Assume that the following conditions hold:
\begin{itemize}
    \item[{\rm(H1)}] \((V,\omega,\mu)\) is a connected, locally finite, undirected
    weighted graph satisfying
    \[
    \sum_{y\sim x}\omega_{xy}\le C\mu(x),
    \qquad x\in V,
    \]
    for some constant \(C>0\);

    \item[{\rm(H2)}] there exists a pseudo-metric \(d\) on \(V\) with finite
    jump size, that is,
    \[
    j:=\sup_{x\sim y} d(x,y)<\infty,
    \]
    and such that every ball
    \[
    B_r(x):=\{y\in V:\ d(x,y)<r\}
    \]
    is finite for every \(x\in V\) and \(r>0\);

    \item[{\rm(H3)}] there exist \(x_0\in V\), \(R_0>1\), \(\alpha\in[0,1]\),
    and \(C>0\) such that
    \[
    \Delta d(\cdot,x_0)(x)
    \le
    \frac{C}{d^\alpha(x,x_0)}
    \qquad\text{for all } x\in V\setminus B_{R_0}(x_0);
    \]

    \item[{\rm(H4)}] the weighted annular volume condition
    \[
    \sum_{x\in B_{2R}(x_0)\setminus B_R(x_0)}
    v^{-\frac{1}{\sigma-1}}(x)\mu(x)
    =
    O\left(R^{\frac{(1+\alpha)\sigma}{\sigma-1}}\right)
    \qquad\text{as } R\to\infty
    \]
    holds.
\end{itemize}
Then the only nonnegative solution of \eqref{P0} is \(u\equiv0\) in \(V\).
\end{theorem}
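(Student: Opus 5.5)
We argue by the nonlinear capacity (test function) method adapted to the graph setting. Let \(u\ge0\) be a solution of \eqref{P0}, and fix a smooth nonincreasing \(\eta:[0,\infty)\to[0,1]\) with \(\eta=1\) on \([0,1]\) and \(\eta=0\) on \([2,\infty)\). For large \(R\), set \(\varphi_R(x)=\eta(d(x,x_0)/R)^{k}\), with \(k\) large (say \(k\ge \frac{2\sigma}{\sigma-1}\)). By (H2), \(\varphi_R\) has finite support, so no boundary terms arise. Multiplying \eqref{P0} by \(\varphi_R\mu\), summing over \(V\), and using the symmetry of \(\omega\) (discrete Green formula) gives
\[
\sum_{V} v\,u^\sigma\varphi_R\,\mu\le -\sum_V u\,\Delta\varphi_R\,\mu\le \sum_V u\,(\Delta\varphi_R)_-\,\mu .
\]
The key step is a pointwise upper bound for \(-\Delta\varphi_R\). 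It should be supported in the annulus \(A_R=B_{2R+j}\setminus B_{R-j}\), enlarged by the jump size \(j\), and bounded there by \(C R^{-(1+\alpha)}\varphi_R^{(k-2)/k}\) or a similar power, so that the cut-off factor can later be absorbed by Hölder's inequality.

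To obtain this bound, write \(\varphi_R(y)-\varphi_R(x)\) with \(f=\eta^k(\cdot/R)\) and \(t=d(\cdot,x_0)\), and Taylor expand to second order:
\[
f(t(y))-f(t(x))=f'(t(x))\,(t(y)-t(x))+\tfrac12 f''(\xi)\,(t(y)-t(x))^2 .
\]
Summing against \(\omega_{xy}/\mu(x)\), the first-order part becomes \(f'(t(x))\,\Delta d(\cdot,x_0)(x)\). Since \(f'\le0\) and (H3) holds, this gives \(-f'\Delta d\ge -|f'|\,C d^{-\alpha}\), and hence a contribution of order \(R^{-1}R^{-\alpha}\) to \(-\Delta\varphi_R\) on the annulus. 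The second-order part is controlled by \(|t(y)-t(x)|\le j\) together with (H1): it is at most \(C j^2\sup|f''|\lesssim R^{-2}\le R^{-1-\alpha}\) since \(\alpha\le1\). We also need the annulus to lie in \(V\setminus B_{R_0}\), which holds for large \(R\).

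I expect the main obstacle to be the factor \(\varphi_R^{(k-2)/k}\) at the intermediate point \(\xi\), because \(\eta(\xi/R)\) is not comparable to \(\eta(t(x)/R)\) near the outer edge of the support. To handle this, I would choose \(\eta\) so that \(\eta(s)\) and \(\eta(s\pm j/R)\) remain comparable for large \(R\), for instance with \(\eta\) of the form \(\exp\)-type, or simply with \(\eta^{k}\) on a smoothly decaying profile. Alternatively, I would apply Young's inequality only with the weight \(\varphi_R\) evaluated at neighbours and then sum by symmetry.

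With this bound, Young's inequality gives
\[
u\,|\Delta\varphi_R|\le \tfrac12 v u^\sigma\varphi_R + C\,v^{-\frac1{\sigma-1}}|\Delta\varphi_R|^{\sigma'}\varphi_R^{-\frac{1}{\sigma-1}},
\]
where \(\sigma'=\sigma/(\sigma-1)\). It follows that
\[
\sum_V v u^\sigma\varphi_R\mu\le C R^{-\frac{(1+\alpha)\sigma}{\sigma-1}}\sum_{A_R}v^{-\frac1{\sigma-1}}\mu\le C,
\]
by (H4), after covering \(A_R\) by \(O(1)\) dyadic annuli. Letting \(R\to\infty\) yields \(\sum_V v u^\sigma\mu<\infty\). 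We then refine with Hölder instead of Young:
\[
\sum_V v u^\sigma\varphi_R\mu\le C\Big(\sum_{A_R} v u^\sigma\varphi_R\mu\Big)^{1/\sigma}.
\]
The right-hand side tends to \(0\), being the tail of a convergent series. Hence \(\sum_V v u^\sigma\mu=0\), and since \(v>0\), we conclude \(u\equiv0\).
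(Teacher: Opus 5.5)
You have the right skeleton: a power of the cut-off \(\eta(d(\cdot,x_0)/R)\), the discrete Green formula, a one-sided bound on \(-\Delta\varphi_R\) supported in \(B_{2R+j}(x_0)\setminus B_{R-j}(x_0)\), then Young, Fatou, H\"older and a tail argument. The Taylor step using (H1), (H3) and \(\alpha\le1\) is fine, and so is the dyadic covering of the enlarged annulus.

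There are two slips. The first-order term should read \(-f'(t)\,\Delta d=|f'(t)|\,\Delta d\le C|f'(t)|\,d^{-\alpha}\). In the Young step you must keep \((\Delta\varphi_R)_-\), not \(|\Delta\varphi_R|\), because (H3) gives no lower bound on \(\Delta d\).

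The genuine gap is the step you flag yourself, and it is not proved. The argument needs
\[
(\Delta\varphi_R)_-(x)\le CR^{-(1+\alpha)}\varphi_R^{\theta}(x),\qquad \theta\ge 1/\sigma,
\]
with the cut-off factor evaluated at \(x\). Otherwise the term \(v^{-\frac{1}{\sigma-1}}\bigl((\Delta\varphi_R)_-\bigr)^{\sigma'}\varphi_R^{-\frac{1}{\sigma-1}}\) produced by Young is uncontrolled near the outer edge of the support.

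Your main remedy cannot work. For any compactly supported profile, \(\eta(s-j/R)/\eta(s)\) is unbounded as \(s\) approaches the edge of the support, and exponential-type profiles make this worse, not better. Nothing in (H2) prevents \(d(x,x_0)/R\) from lying arbitrarily close to that edge while a neighbour is \(j\) closer to \(x_0\). The ``Young with weights at neighbours'' alternative is too unspecified to assess.

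The missing idea is to Taylor-expand only \(\psi_R=\eta(d(\cdot,x_0)/R)\), never its power. The paper quotes this theorem rather than reproving it, but this is exactly what it does in Section~\ref{sec4} for Theorem~\ref{T3.1}. With \(\eta\) nonincreasing and \(\eta''\) bounded, your own computation gives
\[
(-\Delta\psi_R)^+\le CR^{-(1+\alpha)}\mathbf 1_{E_R},\qquad E_R=B_{2R+j}(x_0)\setminus B_{R-j}(x_0).
\]
No power of \(\eta\) enters the remainder, so no comparability is needed. Then use convexity, \(a^k-b^k\le k a^{k-1}(a-b)\) for \(a,b\ge0\), with \(a=\psi_R(x)\) and \(b=\psi_R(y)\). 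This gives
\[
-\Delta(\psi_R^k)(x)\le k\,\psi_R^{k-1}(x)\,(-\Delta\psi_R)^+(x),
\]
with the cut-off factor at \(x\) itself. For \(k>\sigma/(\sigma-1)\), H\"older (or Young) then closes exactly as in the paper.

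In fact the statement is a special case of Theorem~\ref{T3.1}. Take \(H\equiv1\), \(\gamma=1+\alpha\), \(\varphi_R=\psi_R\) and this \(E_R\). Finiteness of balls makes \(\psi_R\) finitely supported. \(E_R\) escapes to infinity because every finite set lies in some \(B_r(x_0)\).

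Your direct route could also be rescued by choosing \(\eta\) convex wherever it is small, so that the second-order remainder has the favourable sign there. That choice would have to be stated and checked.
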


This result is both natural and useful. It extends the classical capacitary
philosophy for semilinear elliptic inequalities to the setting of weighted
graphs and gives a clear geometric sufficient condition for nonexistence. In
particular, the annular quantity in {\rm(H4)} plays the role of a capacitary
quantity at infinity, while the pseudo-metric assumptions {\rm(H2)}--{\rm(H3)}
provide a convenient way to construct cut-off functions adapted to the graph.

However, this metric formulation may be restrictive for certain graphs. Indeed,
the proof in \cite{MonticelliPunzoSomaglia2025} uses test functions of the form
\[
\psi\left(\frac{d(x,x_0)}{R}\right),
\]
and the finite jump-size condition leads to estimates supported in enlarged
pseudo-metric annuli such as
\[
B_{2R+j}(x_0)\setminus B_{R-j}(x_0).
\]
The condition {\rm(H4)} then controls these estimates by imposing a weighted
volume bound on metric annuli. Consequently, vertices with large measure may be
included in the annular quantity even though they do not play an essential role
in the variation of the test function. In such situations, the annular volume
condition may fail, even though one can still construct cut-off functions whose
Laplacian is supported on a much smaller part of the graph; see
Section~\ref{sec5}.

In this work, we develop a capacitary approach to \eqref{P} that does not rely on a prescribed pseudo-metric structure. Our method is based on a positive solution \(H\) of the linear equation \[ \Delta H+wH=0 \qquad\text{in }V, \] which allows us to absorb the potential term into a transformed weighted graph. More precisely, writing \(u=Hz\), the operator \(\Delta+w\) is converted into the \(H\)-Laplacian \(\Delta_H\), and the study of \eqref{P} is reduced to a weighted inequality involving \(\Delta_H\) (see Subsection~\ref{subs-Delta-H}). The main assumption is then expressed directly in terms of cut-off functions: we require a family \(\varphi_R\) for which the positive part of \(-\Delta_H\varphi_R\) is controlled on suitable sets \(E_R\). This formulation does not prescribe the sets \(E_R\) through metric annuli.
Rather, they are chosen according to the region where the \(H\)-Laplacian of
the cut-off function is effectively supported. This makes the criterion
applicable to graphs for which annular volume conditions are too restrictive.

We provide an explicit example showing that this flexibility is effective. More
precisely, we construct a weighted graph for which our nonexistence theorem
applies, whereas the result of Gu, Huang, and Sun \cite{GuHuangSun2023} does
not apply because the structural condition \eqref{p0-cd} fails. The same
example also falls outside the scope of Theorem~\ref{T-Punzo}, since the
annular volume condition {\rm(H4)} fails for every admissible pseudo-metric of
the type considered there. This illustrates that the present capacitary
formulation is not merely a reformulation of earlier metric criteria, but gives
a genuinely more flexible nonexistence principle. 
Moreover, we investigate the sharpness of the capacitary growth condition
appearing in our main theorem. We show that the exponent in this condition is
optimal by constructing an example for which the structural assumptions of the
theorem remain valid, but the capacitary quantity grows faster than the
critical rate by an arbitrary factor \(R^\varepsilon\), and a positive
nontrivial solution exists. This shows that the capacitary growth condition
cannot, in general, be weakened.

The rest of the paper is organized as follows.  In Section~\ref{sec2}, we recall the basic notions on weighted graphs, including the difference operator, the graph Laplacian, and the transformed graph Laplacian associated with a positive function \(H\). In Section~\ref{sec3}, we state the main nonexistence result. Its proof is provided in Section~\ref{sec4}. In Section~\ref{sec5}, we present a comparison example with \(w\equiv0\), showing that our result applies in situations not covered by previous annular-volume criteria. Finally, in Section~\ref{sec6}, we discuss the sharpness of the capacitary condition.

\section{Preliminaries}\label{sec2}

\subsection{The weighted graph setting}

Let \(V\) be a countably infinite set. A weighted graph on \(V\) is a triple
\[
(V,\omega,\mu),
\]
where \(\omega:V\times V\to[0,\infty)\) satisfies
\[
\omega_{xy}=\omega_{yx},\qquad \omega_{xx}=0,
\qquad x,y\in V,
\]
and \(\mu:V\to(0,\infty)\). The quantity \(\omega_{xy}\) is called the edge
weight between \(x\) and \(y\), and \(\mu(x)\) is the measure of the vertex
\(x\).

Two distinct vertices \(x,y\in V\) are said to be adjacent, and we write
\(x\sim y\) if
\[
\omega_{xy}>0.
\]
A path joining \(x\) to \(y\) is a finite sequence of vertices
\[
x=x_0,x_1,\ldots,x_m=y
\]
such that
\[
x_{i-1}\sim x_i,\qquad i=1,\ldots,m.
\]
The graph is said to be connected if any two vertices can be joined by a path.
It is said to be locally finite if, for every \(x\in V\), the set
\[
\{y\in V:\ y\sim x\}
\]
is finite.

For every \(B\subset V\), we set
\[
\mu(B):=\sum_{x\in B}\mu(x).
\]

\subsection{Difference and Laplacian operators}

Let \((V,\omega,\mu)\) be a connected, locally finite weighted graph. For a
function \(f:V\to\mathbb R\), we define the difference operator by
\[
\nabla_{xy}f=f(y)-f(x),
\qquad x,y\in V.
\]
The Laplacian operator associated with \((V,\omega,\mu)\) is defined by
\[
\Delta f(x)
=
\frac{1}{\mu(x)}
\sum_{y\sim x}\omega_{xy}\bigl(f(y)-f(x)\bigr),
\qquad x\in V.
\]
Equivalently,
\[
\Delta f(x)
=
\frac{1}{\mu(x)}
\sum_{y\sim x}\omega_{xy}\nabla_{xy}f,
\qquad x\in V.
\]
Because the graph is locally finite, the above sums are finite for every
\(x\in V\).

For \(f:V\to\mathbb R\), we denote its support by
\[
\operatorname{supp} f=\{x\in V:\ f(x)\ne 0\}.
\]
We say that \(f\) has finite support if \(\operatorname{supp} f\) is a finite
subset of \(V\).

We shall use the following integration by parts formula. Assume that at least
one of the functions \(f,g:V\to\mathbb R\) has finite support. Then
\begin{equation}\label{Int-parts}
\sum_{x\in V}\Delta f(x)g(x)\mu(x)
=
-\frac12
\sum_{x,y\in V}\omega_{xy}\nabla_{xy}f\,\nabla_{xy}g
=
\sum_{x\in V}f(x)\Delta g(x)\mu(x).
\end{equation}

The above definitions and the integration by parts formula are standard in
analysis on weighted graphs. For further background, we refer to
\cite{Grigoryan2018}.

We shall also use the following maximum principle for superharmonic functions
on connected, locally finite graphs; see \cite[Lemma 4.1]{MonticelliPunzoSomaglia2025}.

\begin{lemma}\label{L2.1}
Let \((V,\omega,\mu)\) be a connected, locally finite weighted graph. If
\(u:V\to[0,\infty)\) satisfies
\[
\Delta u\le 0
\qquad\text{in } V,
\]
then either \(u\equiv 0\) in \(V\), or
\[
u(x)>0,\qquad x\in V.
\]
\end{lemma}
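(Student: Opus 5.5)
The plan is to argue by contradiction, using the discrete structure directly. Suppose \(u\ge0\), \(\Delta u\le0\) in \(V\), and \(u(x_0)=0\) for some \(x_0\in V\). We will show that then \(u(y)=0\) for every neighbour \(y\sim x_0\). Connectedness will then propagate the zero set to all of \(V\). Consequently, if \(u\) vanishes at a single vertex it vanishes identically; otherwise \(u>0\) everywhere, which is the dichotomy claimed in Lemma~\ref{L2.1}.

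For the key step, evaluate the inequality \(\Delta u(x_0)\le0\) using the definition of the Laplacian. Since \(u(x_0)=0\),
\[
0\ge \Delta u(x_0)=\frac{1}{\mu(x_0)}\sum_{y\sim x_0}\omega_{x_0y}\,u(y).
\]
Every term on the right is nonnegative, because \(\omega_{x_0y}>0\) for \(y\sim x_0\) and \(u\ge0\). The sum is finite by local finiteness and \(\mu(x_0)>0\), so each term must vanish. Hence \(u(y)=0\) for all \(y\sim x_0\).

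To propagate, let \(Z=\{x\in V:\ u(x)=0\}\). The previous step shows that \(Z\) is closed under adjacency. Given any \(x\in V\), choose a path \(x_0=z_0\sim z_1\sim\cdots\sim z_m=x\), which exists since the graph is connected. Induction along the path gives \(z_i\in Z\) for every \(i\), so \(x\in Z\). Therefore \(Z\neq\emptyset\) implies \(Z=V\).

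There is no genuine obstacle here. The only point needing care is that the neighbourhood sum is finite, which is guaranteed by local finiteness, and that the edge weights on adjacent pairs are strictly positive, which holds by the definition of adjacency. This is what makes each individual term vanish.
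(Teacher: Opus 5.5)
Your proof is correct and complete: at a zero \(x_0\) of \(u\), \(\Delta u(x_0)\) equals \(\frac{1}{\mu(x_0)}\sum_{y\sim x_0}\omega_{x_0y}u(y)\), a sum of nonnegative terms, so every neighbour must vanish, and connectedness spreads the zero set to all of \(V\). The paper gives no proof of its own and simply quotes the lemma from \cite[Lemma~4.1]{MonticelliPunzoSomaglia2025}; your argument is the standard self-contained proof of that statement. It also applies verbatim to \(\Delta_H\), as needed in Remark~\ref{rem:max-principle-H}, because \(\omega^H_{xy}>0\) exactly when \(\omega_{xy}>0\).
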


\subsection{The transformed graph Laplacian}\label{subs-Delta-H}

Let \((V,\omega,\mu)\) be a connected, locally finite weighted graph, and let
\[
H:V\to(0,\infty).
\]
We define the weighted graph
\[
(V,\omega^H,\mu_H)
\]
by
\[
\omega^H_{xy}=H(x)H(y)\omega_{xy},
\qquad x,y\in V,
\]
and
\[
\mu_H(x)=H^2(x)\mu(x),
\qquad x\in V.
\]

The graph Laplacian associated with \((V,\omega^H,\mu_H)\), called the
\(H\)-Laplacian, is denoted by \(\Delta_H\). Namely, for a function \(f:V\to\mathbb R\),
\[
\Delta_H f(x)
=
\frac{1}{H^2(x)\mu(x)}
\sum_{y\sim x}
\omega_{xy}H(x)H(y)\bigl(f(y)-f(x)\bigr),
\qquad x\in V.
\]

Notice that, if \(H\equiv 1\), then \((V,\omega^H,\mu_H)=(V,\omega,\mu)\) and
\(\Delta_H=\Delta\).

\begin{remark}\label{rem:max-principle-H}
Since \(H>0\), the transformed graph \((V,\omega^H,\mu_H)\) has the same
adjacency relation as \((V,\omega,\mu)\). Hence, if \((V,\omega,\mu)\) is
connected and locally finite, then so is \((V,\omega^H,\mu_H)\). Consequently,
the maximum principle stated in Lemma~\ref{L2.1} applies to the operator
\(\Delta_H\). In particular, if \(z:V\to[0,\infty)\) satisfies
\[
\Delta_H z\le 0
\qquad\text{in } V,
\]
then either \(z\equiv 0\) in \(V\), or
\[
z(x)>0,\qquad x\in V.
\]
\end{remark}

\section{Main result}\label{sec3}

Here and in the sequel, let \((V,\omega,\mu)\) be a connected, locally finite
weighted graph.  For a function \(f:V\to\mathbb R\), we denote by \(f^+\) its
positive part, that is,
\[
f^+(x)=\max\{f(x),0\},
\qquad x\in V.
\]

We study problem \eqref{P} under the following assumptions:

\begin{itemize}
    \item[{\rm(A1)}] there exists a positive function \(H:V\to(0,\infty)\) such that
    \[
    \Delta H+w(x)H=0
    \qquad\text{in } V;
    \]

    \item[{\rm(A2)}] there exist constants \(\gamma>0\), \(R_0>0\), and \(C>0\),
    a family of finitely supported functions
    \[
    \varphi_R:V\to[0,1],
    \qquad R\ge R_0,
    \]
    and a family of subsets \(E_R\subset V\) such that
    \begin{equation}\label{limvarphi}
    \lim_{R\to\infty}\varphi_R(x)=1
    \qquad\text{for every } x\in V,
    \end{equation}
    and
    \begin{equation}\label{Deltavarphi}
    (-\Delta_H\varphi_R)^+(x)
    \le
    \frac{C}{R^\gamma}\mathbf 1_{E_R}(x),
    \qquad x\in V,\ R\ge R_0;
    \end{equation}

    \item[{\rm(A3)}] the sets \(E_R\) escape to infinity, in the sense that, for
    every finite set \(K\subset V\), there exists \(R_K>0\) such that
    \[
    E_R\cap K=\varnothing
    \qquad\text{for all } R\ge R_K.
    \]
\end{itemize}

\begin{remark}
Assumption {\rm(A2)} is a capacitary condition on the graph expressed directly
in terms of cut-off functions. More precisely, the sets \(E_R\) represent the
region where the positive part of the \(H\)-Laplacian of \(\varphi_R\) may be
nonzero, and \eqref{Deltavarphi} gives a quantitative control of this positive
part.

This formulation is different from the metric one used in
\cite{MonticelliPunzoSomaglia2025}. There, the cut-off functions are built from
a pseudo-metric \(d\), typically in the form
\[
\psi\left(\frac{d(x,x_0)}{R}\right),
\]
and the corresponding estimates are controlled by weighted volumes of
pseudo-metric annuli. In contrast, assumption {\rm(A2)} does not prescribe a
pseudo-metric structure. The relevant sets \(E_R\) are chosen according to the
actual support of the cut-off estimate, rather than being imposed a priori by
metric annuli. Thus the present formulation separates the cut-off estimate from any prescribed
metric geometry and can be applied to graphs for which annular volume conditions
may not capture the relevant capacitary behavior.
\end{remark}

\begin{remark}
Assumption {\rm(A1)} allows us to incorporate the potential term \(w u\) into
a transformed graph structure. Indeed, if \(u=Hz\), then the identity
\[
\Delta u+w u=H\Delta_H z
\]
holds, where \(\Delta_H\) is the weighted Laplacian associated with the
transformed weights
\[
\omega^H_{xy}=H(x)H(y)\omega_{xy},
\qquad
\mu_H(x)=H^2(x)\mu(x).
\]
Thus the potential \(w\) is not treated as a perturbative term. Instead, it is
absorbed into the geometry of the transformed graph. This is the reason why the
cut-off condition in {\rm(A2)} is formulated in terms of \(\Delta_H\) rather
than the original Laplacian \(\Delta\).
\end{remark}

We now state the main nonexistence result of this paper.

\begin{theorem}\label{T3.1}
Assume that {\rm(A1)}--{\rm(A3)} hold. Suppose moreover that
\begin{equation}\label{capacity-condition}
\sum_{x\in E_R}
\mu(x)H(x)v^{-\frac{1}{\sigma-1}}(x)
=
O\left(R^{\frac{\gamma\sigma}{\sigma-1}}\right)
\qquad\text{as } R\to\infty.
\end{equation}
Then the only nonnegative solution of \eqref{P} is \(u\equiv 0\) in \(V\).
\end{theorem}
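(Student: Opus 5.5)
We argue by contradiction: assume $u\ge0$ solves \eqref{P} with $u\not\equiv0$. First we transform the problem. Write $u=Hz$ with $z=u/H\ge0$. By (A1) and the identity $\Delta u+wu=H\Delta_H z$, inequality \eqref{P} becomes
\[
\Delta_H z+v H^{\sigma-1}z^\sigma\le 0\qquad\text{in }V .
\]
In particular $\Delta_H z\le0$. By Remark~\ref{rem:max-principle-H}, since $z\not\equiv0$, we get $z>0$ everywhere.

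Next we test against $\varphi_R$. Multiply by $\varphi_R\mu_H=\varphi_R H^2\mu$ and sum over $V$; the sum is finite because $\varphi_R$ has finite support. Using the integration by parts formula \eqref{Int-parts} for the graph $(V,\omega^H,\mu_H)$, we obtain
\[
I_R:=\sum_{x\in V} v H^{\sigma+1}\mu\, z^\sigma\varphi_R
\le -\sum_{x\in V} z\,\Delta_H\varphi_R\,\mu_H
\le \sum_{x\in V} z\,(-\Delta_H\varphi_R)^+ H^2\mu .
\]
By \eqref{Deltavarphi}, the right-hand side is at most $CR^{-\gamma}\sum_{E_R} zH^2\mu$.

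The main obstacle is that $\varphi_R$ carries no power, so $E_R$ is not controlled by $\varphi_R$ itself. The standard device of using $\varphi_R^k$ fails because $\Delta_H(\varphi^k)$ is not controlled by \eqref{Deltavarphi}. I would instead apply Hölder on $E_R$ with the weight $vH^{\sigma+1}\mu$:
\[
\sum_{E_R} zH^2\mu\le\Bigl(\sum_{E_R} vH^{\sigma+1}z^\sigma\mu\Bigr)^{1/\sigma}\Bigl(\sum_{E_R} v^{-\frac1{\sigma-1}}H\mu\Bigr)^{\frac{\sigma-1}{\sigma}},
\]
since $H^2=(v^{1/\sigma}H^{(\sigma+1)/\sigma})\cdot(v^{-1/\sigma}H^{(\sigma-1)/\sigma})$ and $(H^{(\sigma-1)/\sigma})^{\sigma/(\sigma-1)}=H$. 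By \eqref{capacity-condition}, the second factor is $O(R^{\gamma})$, which cancels $R^{-\gamma}$. Let
\[
J_R:=\sum_{E_R} vH^{\sigma+1}z^\sigma\mu .
\]
Then $I_R\le C' J_R^{1/\sigma}$.

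It remains to show $J_R\to0$ and $I_R\ge c>0$, which requires first proving that the total mass $M:=\sum_V vH^{\sigma+1}z^\sigma\mu$ is finite. Fix a finite set $K$. For large $R$, \eqref{limvarphi} gives $\varphi_R\ge1/2$ on $K$, so
\[
\tfrac12\sum_K vH^{\sigma+1}z^\sigma\mu\le I_R\le C'J_R^{1/\sigma}\le C' M^{1/\sigma}.
\]
This is circular if $M=\infty$. To avoid this, bound $J_R\le\sum_{V\setminus K} (\cdots)$ for large $R$ (by (A3)); this still does not give finiteness directly. Alternatively, apply Young's inequality only to the terms on $E_R$, which yields $I_R\le C''$ uniformly in $R$. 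Letting $R\to\infty$ and using Fatou's lemma with \eqref{limvarphi} gives $M\le C''<\infty$.

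Then (A3) gives, for every finite $K$ and $R\ge R_K$, $J_R\le\sum_{V\setminus K}vH^{\sigma+1}z^\sigma\mu$, which is small once $K$ is large since $M<\infty$. Hence $J_R\to0$. Applying Fatou once more, $M\le\liminf_{R\to\infty} I_R\le \lim_{R\to\infty} C'J_R^{1/\sigma}=0$. Since $v>0$, $H>0$ and $\mu>0$, this forces $z\equiv0$, contradicting $z>0$.

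Note that $I_R\ge0$ and $\varphi_R\ge0$ are all that these steps require, and the control of $(-\Delta_H\varphi_R)^+$ alone suffices because $z\ge0$. Thus the delicate step is the finiteness of $M$ obtained via Young's inequality.
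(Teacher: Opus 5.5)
There is a genuine gap, and it is the step you yourself flagged: you never actually obtain the uniform bound on $I_R$, i.e.\ $M<\infty$. Applying Young's inequality on $E_R$ gives only
\[
I_R\le \epsilon\, J_R + C_\epsilon R^{-\frac{\gamma\sigma}{\sigma-1}}\sum_{x\in E_R}\mu(x)H(x)v^{-\frac{1}{\sigma-1}}(x)\le \epsilon J_R+C_\epsilon .
\]
The term $\epsilon J_R$ cannot be absorbed into the left-hand side. $J_R$ carries no factor $\varphi_R$, while $I_R$ does, and nothing in (A2)--(A3) prevents $\varphi_R$ from vanishing on much of $E_R$. For instance, in Example~\ref{ex1} the set $E_R$ contains the vertices $a_n$ with $2R\le n\le 2R+1$, where $\varphi_R=0$. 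Since $J_R\le M$, this is the same circularity you noticed a line earlier, and the Young step does not break it.

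The missing idea is the device you dismissed. Your claim that $\Delta_H(\varphi_R^s)$ is not controlled by \eqref{Deltavarphi} is incorrect. By convexity of $t\mapsto t^s$ for $s>1$, $\varphi_R^s(x)-\varphi_R^s(y)\le s\varphi_R^{s-1}(x)\bigl(\varphi_R(x)-\varphi_R(y)\bigr)$. Since $\omega^H_{xy}\ge0$, summing gives
\[
-\Delta_H(\varphi_R^s)\le s\varphi_R^{s-1}(-\Delta_H\varphi_R)^+\le \frac{sC}{R^\gamma}\varphi_R^{s-1}\mathbf 1_{E_R}.
\]
Now test with $\varphi_R^s$ for $s>\frac{\sigma}{\sigma-1}$, and define $I_R$ with $\varphi_R^s$. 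Apply H\"older with the splitting $\varphi_R^{s-1}=\varphi_R^{s/\sigma}\cdot\varphi_R^{s-1-s/\sigma}$, using $\varphi_R^{s-\sigma/(\sigma-1)}\le 1$. This places $\varphi_R^s$ inside the first factor, so that factor is bounded by $I_R$ itself. Hence $I_R\le C I_R^{1/\sigma}$, and so $I_R\le C$; note $I_R<\infty$ because $\varphi_R$ is finitely supported. Fatou's lemma then gives $M<\infty$.

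From there your endgame is correct and coincides with the paper's argument: (A3) forces $J_R\to0$, then $I_R\le C J_R^{1/\sigma}\to 0$, and a second application of Fatou gives $M=0$.
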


\begin{remark}
\begin{itemize}
    \item[{\rm(i)}] In Section~\ref{sec5}, we present a comparison example with
    \(w\equiv0\). This example shows that Theorem~\ref{T3.1} applies in a
    situation where neither the main result of
\cite{GuHuangSun2023} nor that of \cite{MonticelliPunzoSomaglia2025} applies.

   \item[{\rm(ii)}] In Section~\ref{sec6}, we discuss the sharpness of the
    capacitary condition \eqref{capacity-condition}. More precisely, we show
    that the growth exponent appearing in \eqref{capacity-condition} cannot, in
    general, be improved.
\end{itemize}
\end{remark}

\section{Proof of Theorem~\ref{T3.1}}\label{sec4}
\begin{proof}
Assume that {\rm(A1)}--{\rm(A3)} hold, as well as \eqref{capacity-condition}.
We argue by contradiction and suppose that \(u:V\to[0,\infty)\) is a nontrivial
solution of \eqref{P}.

Define
\[
z(x)=\frac{u(x)}{H(x)},
\qquad x\in V.
\]
Since \(H>0\) in \(V\) and \(u\) is nontrivial, we have \(z\ge 0\) in \(V\)
and \(z\not\equiv 0\) in \(V\).

We first show that \(z\) satisfies
\begin{equation}\label{ineq-z}
\Delta_H z+v(x)H^{\sigma-1}(x)z^\sigma\le 0
\qquad\text{in } V,
\end{equation}
where \(\Delta_H\) is the graph Laplacian associated with
\((V,\omega^H,\mu_H)\), with
\[
\omega^H_{xy}=H(x)H(y)\omega_{xy},
\qquad
\mu_H(x)=H^2(x)\mu(x).
\]
Indeed, for every \(x\in V\), we have
\[
\begin{aligned}
\Delta_H z(x)
&=
\frac{1}{H^2(x)\mu(x)}
\sum_{y\sim x}
\omega_{xy}H(x)H(y)
\left(\frac{u(y)}{H(y)}-\frac{u(x)}{H(x)}\right)\\
&=
\frac{1}{H^2(x)\mu(x)}
\sum_{y\sim x}
\omega_{xy}\bigl(H(x)u(y)-H(y)u(x)\bigr)\\
&=
\frac{1}{H^2(x)\mu(x)}
\sum_{y\sim x}
\omega_{xy}
\Bigl(H(x)\bigl(u(y)-u(x)\bigr)
-u(x)\bigl(H(y)-H(x)\bigr)\Bigr)\\
&=
\frac{1}{H(x)}
\frac{1}{\mu(x)}
\sum_{y\sim x}\omega_{xy}\bigl(u(y)-u(x)\bigr)
-
\frac{u(x)}{H^2(x)}
\frac{1}{\mu(x)}
\sum_{y\sim x}\omega_{xy}\bigl(H(y)-H(x)\bigr)\\
&=
\frac{\Delta u(x)}{H(x)}
-
\frac{u(x)\Delta H(x)}{H^2(x)}.
\end{aligned}
\]
Using {\rm(A1)}, namely \(\Delta H(x)=-w(x)H(x)\), we obtain
\[
\Delta_H z(x)
=
\frac{\Delta u(x)+w(x)u(x)}{H(x)}.
\]
Since \(u\) satisfies \eqref{P}, it follows that
\[
\Delta_H z(x)
\le
-\frac{v(x)u^\sigma(x)}{H(x)}
=
-v(x)H^{\sigma-1}(x)z^\sigma(x),
\]
which proves \eqref{ineq-z}.

Set
\[
v_H(x)=v(x)H^{\sigma-1}(x),
\qquad x\in V.
\]
Then \eqref{ineq-z} becomes
\begin{equation}\label{ineq-zH}
\Delta_H z+v_H(x)z^\sigma\le 0
\qquad\text{in } V.
\end{equation}
Since \(v_H>0\) in \(V\), we have
\[
\Delta_H z\le 0
\qquad\text{in } V.
\]
By Remark~\ref{rem:max-principle-H}, since \(z\ge 0\) in \(V\)
and \(z\not\equiv 0\) in \(V\),  we have
\begin{equation}\label{z-positive}
z(x)>0,
\qquad x\in V.
\end{equation}

Let
\[
s>\frac{\sigma}{\sigma-1}.
\]
Multiplying \eqref{ineq-zH} by \(\varphi_R^s(x)\mu_H(x)\) and summing over
\(V\), we obtain
\begin{equation}\label{IR1}
\sum_{x\in V}\mu_H(x)v_H(x)z^\sigma(x)\varphi_R^s(x)
\le
-\sum_{x\in V}\mu_H(x)\Delta_H z(x)\varphi_R^s(x).
\end{equation}
Since \(\varphi_R\) has finite support, the integration by parts formula
\eqref{Int-parts}, applied to the weighted graph \((V,\omega^H,\mu_H)\), gives
\[
\sum_{x\in V}\mu_H(x)\Delta_H z(x)\varphi_R^s(x)
=
\sum_{x\in V}\mu_H(x)z(x)\Delta_H(\varphi_R^s)(x).
\]
Therefore, by \eqref{IR1}, we have
\begin{equation}\label{IR2}
\sum_{x\in V}\mu_H(x)v_H(x)z^\sigma(x)\varphi_R^s(x)
\le
-\sum_{x\in V}\mu_H(x)z(x)\Delta_H(\varphi_R^s)(x).
\end{equation}

Here and in the sequel, \(C\) denotes a positive constant independent of \(R\),
whose value may change from line to line.

Since \(s>1\), the function \(t\mapsto t^s\) is convex on \([0,\infty)\). Hence,
for all \(a,b\ge 0\),
\[
a^s-b^s\le s a^{s-1}(a-b).
\]
Applying this inequality with
\[
a=\varphi_R(x),
\qquad
b=\varphi_R(y),
\]
we obtain, for every \(x,y\in V\),
\[
\varphi_R^s(x)-\varphi_R^s(y)
\le
s\varphi_R^{s-1}(x)\bigl(\varphi_R(x)-\varphi_R(y)\bigr).
\]
Therefore, for every \(x\in V\),
\[
\begin{aligned}
-\Delta_H(\varphi_R^s)(x)
&=
\frac{1}{H^2(x)\mu(x)}
\sum_{y\sim x}
\omega_{xy}H(x)H(y)
\bigl(\varphi_R^s(x)-\varphi_R^s(y)\bigr)\\
&\le
s\varphi_R^{s-1}(x)
\frac{1}{H^2(x)\mu(x)}
\sum_{y\sim x}
\omega_{xy}H(x)H(y)
\bigl(\varphi_R(x)-\varphi_R(y)\bigr)\\
&=
s\varphi_R^{s-1}(x)(-\Delta_H\varphi_R)(x)\\
&\le
s\varphi_R^{s-1}(x)(-\Delta_H\varphi_R)^+(x).
\end{aligned}
\]
By \eqref{Deltavarphi}, it follows that
\[
-\Delta_H(\varphi_R^s)(x)
\le
\frac{C}{R^\gamma}\varphi_R^{s-1}(x)\mathbf 1_{E_R}(x),
\qquad x\in V,\ R\ge R_0.
\]
Combining this estimate with \eqref{IR2}, we obtain
\[
\sum_{x\in V}\mu_H(x)v_H(x)z^\sigma(x)\varphi_R^s(x)
\le
\frac{C}{R^\gamma}
\sum_{x\in E_R}\mu_H(x)z(x)\varphi_R^{s-1}(x).
\]
Set
\[
I_R=
\sum_{x\in V}\mu_H(x)v_H(x)z^\sigma(x)\varphi_R^s(x).
\]
Then
\begin{equation}\label{IR3}
I_R
\le
\frac{C}{R^\gamma}
\sum_{x\in E_R}\mu_H(x)z(x)\varphi_R^{s-1}(x).
\end{equation}

By H\"older's inequality,
\[
\begin{aligned}
\sum_{x\in E_R}\mu_H(x)z(x)\varphi_R^{s-1}(x)
&\le
\left(
\sum_{x\in E_R}
\mu_H(x)v_H(x)z^\sigma(x)\varphi_R^s(x)
\right)^{\frac{1}{\sigma}} \\
&\quad\times
\left(
\sum_{x\in E_R}
\mu_H(x)v_H^{-\frac{1}{\sigma-1}}(x)
\varphi_R^{s-\frac{\sigma}{\sigma-1}}(x)
\right)^{\frac{\sigma-1}{\sigma}}.
\end{aligned}
\]
Since \(0\le \varphi_R\le 1\) and \(s>\frac{\sigma}{\sigma-1}\), we have
\[
\varphi_R^{s-\frac{\sigma}{\sigma-1}}(x)\le 1,
\qquad x\in V.
\]
Hence
\[
\sum_{x\in E_R}\mu_H(x)z(x)\varphi_R^{s-1}(x)
\le
\left(
\sum_{x\in E_R}
\mu_H(x)v_H(x)z^\sigma(x)\varphi_R^s(x)
\right)^{\frac{1}{\sigma}}
\left(
\sum_{x\in E_R}
\mu_H(x)v_H^{-\frac{1}{\sigma-1}}(x)
\right)^{\frac{\sigma-1}{\sigma}}.
\]
Using this estimate in \eqref{IR3}, we get
\begin{equation}\label{IR4}
I_R
\le
\frac{C}{R^\gamma}
\left(
\sum_{x\in E_R}
\mu_H(x)v_H(x)z^\sigma(x)\varphi_R^s(x)
\right)^{\frac{1}{\sigma}}
\left(
\sum_{x\in E_R}
\mu_H(x)v_H^{-\frac{1}{\sigma-1}}(x)
\right)^{\frac{\sigma-1}{\sigma}}.
\end{equation}

Since
\[
\mu_H(x)v_H^{-\frac{1}{\sigma-1}}(x)
=
\mu(x)H(x)v^{-\frac{1}{\sigma-1}}(x),
\]
assumption \eqref{capacity-condition} implies
\[
\sum_{x\in E_R}
\mu_H(x)v_H^{-\frac{1}{\sigma-1}}(x)
=
O\left(R^{\frac{\gamma\sigma}{\sigma-1}}\right)
\qquad\text{as } R\to\infty.
\]
Therefore, it follows from \eqref{IR4} that, for \(R\) sufficiently large,
\begin{equation}\label{IR5}
I_R
\le
C
\left(
\sum_{x\in E_R}
\mu_H(x)v_H(x)z^\sigma(x)\varphi_R^s(x)
\right)^{\frac{1}{\sigma}}.
\end{equation}
Since
\[
\sum_{x\in E_R}
\mu_H(x)v_H(x)z^\sigma(x)\varphi_R^s(x)
\le I_R,
\]
we infer from \eqref{IR5} that
\[
I_R\le C I_R^{\frac{1}{\sigma}}.
\]
Thus,
\begin{equation}\label{IR-bound}
I_R\le C
\end{equation}
for all sufficiently large \(R\).

By Fatou's lemma, together with \eqref{limvarphi} and \eqref{IR-bound}, we get
\[
\sum_{x\in V}\mu_H(x)v_H(x)z^\sigma(x)
\le
\liminf_{R\to\infty} I_R
<\infty.
\]
Hence
\begin{equation}\label{global-integrability}
\sum_{x\in V}\mu_H(x)v_H(x)z^\sigma(x)<\infty.
\end{equation}

We claim that
\begin{equation}\label{tail-ER}
\sum_{x\in E_R}\mu_H(x)v_H(x)z^\sigma(x)\to 0
\qquad\text{as } R\to\infty.
\end{equation}
Indeed, let \(\varepsilon>0\). By \eqref{global-integrability}, there exists a
finite set \(K_\varepsilon\subset V\) such that
\[
\sum_{x\in V\setminus K_\varepsilon}
\mu_H(x)v_H(x)z^\sigma(x)<\varepsilon.
\]
By assumption {\rm(A3)}, there exists \(R_\varepsilon>0\) such that
\[
E_R\cap K_\varepsilon=\varnothing
\qquad\text{for all } R\ge R_\varepsilon.
\]
Thus, for \(R\ge R_\varepsilon\),
\[
E_R\subset V\setminus K_\varepsilon,
\]
and consequently
\[
\sum_{x\in E_R}\mu_H(x)v_H(x)z^\sigma(x)
\le
\sum_{x\in V\setminus K_\varepsilon}
\mu_H(x)v_H(x)z^\sigma(x)
<\varepsilon.
\]
This proves \eqref{tail-ER}.

Since \(0\le \varphi_R\le 1\), \eqref{tail-ER} gives
\[
\sum_{x\in E_R}
\mu_H(x)v_H(x)z^\sigma(x)\varphi_R^s(x)
\to 0
\qquad\text{as } R\to\infty.
\]
Hence, by \eqref{IR5},
\[
I_R\to 0
\qquad\text{as } R\to\infty.
\]
Using Fatou's lemma again, together with \eqref{limvarphi}, we obtain
\[
\sum_{x\in V}\mu_H(x)v_H(x)z^\sigma(x)
\le
\liminf_{R\to\infty}I_R=0.
\]
Therefore,
\[
\sum_{x\in V}\mu_H(x)v_H(x)z^\sigma(x)=0.
\]
Since \(\mu_H>0\), \(v_H>0\), and \(z\ge 0\) in \(V\), we obtain  \(z\equiv 0\) in \(V\), which contradicts \eqref{z-positive}.
This contradiction completes the proof.
\end{proof}

\section{A comparison example with \(w\equiv0\)}\label{sec5}

In this section, we take \(w\equiv 0\) and \(v\equiv 1\) in \eqref{P} and
construct a weighted graph for which neither the main result of
\cite{GuHuangSun2023} nor that of \cite{MonticelliPunzoSomaglia2025} applies,
whereas Theorem~\ref{T3.1} still implies that every nonnegative solution is
identically zero.

\begin{example}\label{ex1}

Fix
\[
D>2.
\]
Let
\[
A=\{a_0,a_1,a_2,\ldots\}
\]
be a countably infinite set of vertices. For each \(n\ge 0\), set
\[
B_n=\{b_n\}.
\]
Assume that the sets
\[
A,\ B_0,\ B_1,\ldots
\]
are pairwise disjoint, and define
\[
V=A\cup\bigcup_{n\ge 0}B_n.
\]

Set
\[
c_n=(n+1)^{D-1},
\qquad n\ge 0.
\]
We define 
\[
\omega:V\times V\to[0,\infty)
\]
by
\[
\omega_{a_n a_{n+1}}=\omega_{a_{n+1}a_n}=c_n,
\qquad
\omega_{a_n b_n}=\omega_{b_n a_n}=1,
\qquad n\ge 0,
\]
and
\[
\omega_{xx}=0,
\qquad x\in V,
\]
with \(\omega_{xy}=0\) for all remaining pairs \((x,y)\in V\times V\).

We define the vertex measure by
\[
\mu(a_n)=c_n+c_{n-1},
\qquad n\ge 0,
\]
with the convention
\[
c_{-1}=0,
\]
and by
\[
\mu(b_n)=e^{n^2},
\qquad n\ge 0.
\]
Then \(\mu(x)>0\) for every \(x\in V\).

By construction, each vertex has finitely many neighbors. Indeed,
\[
\{y\in V:\ y\sim a_0\}=\{a_1,b_0\},
\]
\[
\{y\in V:\ y\sim a_n\}=\{a_{n-1},a_{n+1},b_n\},
\qquad n\ge 1,
\]
and
\[
\{y\in V:\ y\sim b_n\}=\{a_n\},
\qquad n\ge 0.
\]
Hence \((V,\omega,\mu)\) is locally finite. It is also connected, since every
\(b_n\) is connected to \(a_n\), and the vertices \(\{a_n\}_{n\ge 0}\) form a
connected backbone.

We consider problem \eqref{P} with
\[
v\equiv 1,
\qquad
w\equiv 0.
\]
Namely, we consider the elliptic inequality
\begin{equation}\label{P-ex1}
\Delta u+u^\sigma\le 0
\qquad\text{in } V,
\end{equation}
where \(\sigma>1\).

We now verify that {\rm(A1)}--{\rm(A3)} are satisfied.

\(\bullet\)  Since \(w\equiv 0\),  {\rm(A1)} is satisfied with
    \[
    H\equiv 1.
    \]
    Indeed,
    \[
    \Delta H+w(x)H=0
    \qquad\text{in } V.
    \]
   
\(\bullet\)  Let \(\eta\in C^2([0,\infty))\) be such that
    \[
    0\le \eta\le 1,\qquad
    \eta(t)=1\quad\text{for } 0\le t\le 1,
    \qquad
    \eta(t)=0\quad\text{for } t\ge 2.
    \]
    For \(R\ge 2\), define
    \[
    \varphi_R(a_n)=\varphi_R(b_n)=\eta\left(\frac{n}{R}\right),
    \qquad n\ge 0.
    \]
    Then
    \[
    0\le \varphi_R\le 1
    \qquad\text{in } V.
    \]
    Moreover, since \(\eta(t)=0\) for \(t\ge 2\), we have
    \[
    \operatorname{supp}\varphi_R
    \subset
    \{a_n:\ 0\le n<2R\}\cup\{b_n:\ 0\le n<2R\}.
    \]
    Therefore, \(\varphi_R\) has finite support.
    
        Also, for every \(x\in V\), there exists \(n_x\ge 0\) such that
    \[
    x\in\{a_{n_x},b_{n_x}\}.
    \]
    Hence, for all \(R\ge \max\{2,n_x\}\), we have
    \[
    \varphi_R(x)=\eta\left(\frac{n_x}{R}\right)=1.
    \]
    Therefore,
    \[
    \lim_{R\to\infty}\varphi_R(x)=1,
    \qquad x\in V,
    \]
    and \eqref{limvarphi} is satisfied.

     For \(R\ge 2\), set
    \[
    E_R=\{a_n:\ R-1\le n\le 2R+1\}.
    \]
    We claim that
    \begin{equation}\label{Deltavarphi-ex1}
    (-\Delta\varphi_R)^+(x)
    \le
    \frac{C}{R^2}\mathbf 1_{E_R}(x),
    \qquad x\in V,\ R\ge 2.
    \end{equation}
    We distinguish two cases.
    
   \noindent  \textbf{Case 1.} \(x=b_n\) for some \(n\ge 0\).    Since \(b_n\) is adjacent only to \(a_n\), and since
    \[
    \varphi_R(b_n)=\varphi_R(a_n),
    \]
    we have
    \[
    \Delta\varphi_R(b_n)
    =
    \frac{1}{\mu(b_n)}
    \omega_{b_n a_n}\bigl(\varphi_R(a_n)-\varphi_R(b_n)\bigr)
    =0.
    \]
    Thus,
    \[
    (-\Delta\varphi_R)^+(b_n)=0.
    \]
    
 \noindent \textbf{Case 2.} Let \(x=a_n\) for some \(n\ge 0\). The edge joining
    \(a_n\) to \(b_n\) does not contribute to \(\Delta\varphi_R(a_n)\), because
    \[
    \varphi_R(b_n)=\varphi_R(a_n).
    \]
    Therefore, by the definition of \(\mu\), for \(n\ge 1\),
    \begin{equation}\label{Deltaan-ex1}
    \begin{aligned}
    \Delta\varphi_R(a_n)
    &=
    \frac{1}{c_n+c_{n-1}}
    \Bigg[
    c_n\left(\eta\left(\frac{n+1}{R}\right)-\eta\left(\frac{n}{R}\right)\right)\\
    &\qquad\qquad\qquad
    +
    c_{n-1}\left(\eta\left(\frac{n-1}{R}\right)-\eta\left(\frac{n}{R}\right)\right)
    \Bigg].
    \end{aligned}
    \end{equation}
    For \(n=0\), the same formula holds with the term involving \(c_{n-1}\)
    omitted.

     We distinguish two subcases.

 \noindent \textbf{Subcase 2.1.} Assume that \(a_n\notin E_R\). By the definition of
    \(E_R\), either \(n<R-1\) or \(n>2R+1\).

    In the first case, for \(R\ge 2\), we have
    \[
    \frac{n-1}{R},\ \frac{n}{R},\ \frac{n+1}{R}\le 1,
    \]
    whenever the terms are defined, and hence all the corresponding values of
    \(\eta\) are equal to \(1\). In the second case, we have
    \[
    \frac{n-1}{R},\ \frac{n}{R},\ \frac{n+1}{R}\ge 2,
    \]
    and hence all the corresponding values of \(\eta\) are equal to \(0\).
    Therefore, by \eqref{Deltaan-ex1},
    \[
    \Delta\varphi_R(a_n)=0.
    \]
    Thus,
    \[
    (-\Delta\varphi_R)^+(a_n)=0.
    \]
   
\noindent \textbf{Subcase 2.2.} Assume that \(a_n\in E_R\). Since \(R\ge 2\), this
    implies \(n\ge 1\). Set
    \[
    \eta_n=\eta\left(\frac{n}{R}\right),
    \qquad
    \alpha_n=\frac{c_n}{c_n+c_{n-1}},
    \qquad
    \beta_n=\frac{c_{n-1}}{c_n+c_{n-1}}.
    \]
    Then, by \eqref{Deltaan-ex1},
    \[
    \Delta\varphi_R(a_n)
    =
    \alpha_n(\eta_{n+1}-\eta_n)
    +
    \beta_n(\eta_{n-1}-\eta_n).
    \]
    Equivalently,
    \begin{equation}\label{newf-Delta}
    \Delta\varphi_R(a_n)
    =
    \alpha_n(\eta_{n+1}-2\eta_n+\eta_{n-1})
    +
    (\alpha_n-\beta_n)(\eta_n-\eta_{n-1}).
    \end{equation}
    Since \(\eta\in C^2([0,\infty))\) and \(\eta\) is compactly supported, there
    exists \(C>0\) such that, for all \(t\ge 0\),
    \[
    |\eta'(t)|\le C,
    \qquad
    |\eta''(t)|\le C.
    \]
    By the mean value theorem, for \(n\ge 1\),
    \begin{equation}\label{esteta1}
    |\eta_n-\eta_{n-1}|
    =
    \left|\eta\left(\frac{n}{R}\right)-\eta\left(\frac{n-1}{R}\right)\right|
    \le \frac{C}{R}.
    \end{equation}
    Moreover, Taylor's formula gives
    \[
    \eta\left(\frac{n+1}{R}\right)
    =
    \eta\left(\frac{n}{R}\right)
    +
    \frac{1}{R}\eta'\left(\frac{n}{R}\right)
    +
    \frac{1}{2R^2}\eta''(\xi_{n,+}),
    \]
    for some \(\xi_{n,+}\) between \(\frac{n}{R}\) and \(\frac{n+1}{R}\), and
    \[
    \eta\left(\frac{n-1}{R}\right)
    =
    \eta\left(\frac{n}{R}\right)
    -
    \frac{1}{R}\eta'\left(\frac{n}{R}\right)
    +
    \frac{1}{2R^2}\eta''(\xi_{n,-}),
    \]
    for some \(\xi_{n,-}\) between \(\frac{n-1}{R}\) and \(\frac{n}{R}\).
    Adding these two identities, we obtain
    \[
    \eta_{n+1}-2\eta_n+\eta_{n-1}
    =
    \frac{1}{2R^2}
    \left(\eta''(\xi_{n,+})+\eta''(\xi_{n,-})\right).
    \]
    Since \(\eta''\) is bounded on \([0,\infty)\), it follows that
    \begin{equation}\label{esteta2}
    |\eta_{n+1}-2\eta_n+\eta_{n-1}|
    \le
    \frac{C}{R^2}.
    \end{equation}
    Moreover,
    \[
    n\frac{c_n-c_{n-1}}{c_n+c_{n-1}}
    \to \frac{D-1}{2}
    \qquad\text{as } n\to\infty.
    \]
    Hence, there exists \(C>0\) such that
    \begin{equation}\label{esteta3}
    |\alpha_n-\beta_n|
    =
    \frac{|c_n-c_{n-1}|}{c_n+c_{n-1}}
    \le \frac{C}{n},
    \qquad n\ge 1.
    \end{equation}
    We also have
    \begin{equation}\label{esteta4}
    0\le \alpha_n\le 1.
    \end{equation}
    Combining \eqref{newf-Delta}, \eqref{esteta1}, \eqref{esteta2},
    \eqref{esteta3}, and \eqref{esteta4}, we obtain
    \[
    |\Delta\varphi_R(a_n)|
    \le
    \frac{C}{R^2}+\frac{C}{nR},
    \qquad n\ge 1.
    \]
    On the other hand, by the definition of \(E_R\), we have
    \[
    R-1\le n\le 2R+1.
    \]
    Since \(R\ge 2\), this implies
    \[
    n\ge R-1\ge \frac{R}{2}.
    \]
    Therefore,
    \[
    |\Delta\varphi_R(a_n)|
    \le
    \frac{C}{R^2}.
    \]
    Consequently,
    \[
    (-\Delta\varphi_R)^+(a_n)
    \le
    \frac{C}{R^2}\mathbf 1_{E_R}(a_n).
    \]
   
    Combining Cases 1 and 2, we obtain \eqref{Deltavarphi-ex1}. Since
    \(H\equiv 1\), we have \(\Delta_H=\Delta\). Thus, \eqref{Deltavarphi} is
    satisfied with \(\gamma=2\), and therefore  {\rm(A2)} is satisfied.

 \(\bullet\)      We finally verify  {\rm(A3)}. Let \(K\subset V\) be a finite set.
    Then there exists \(N_K\ge 0\) such that
    \[
    K\subset
    \{a_0,\ldots,a_{N_K}\}\cup\{b_0,\ldots,b_{N_K}\}.
    \]
    If \(R>N_K+1\), then
    \[
    E_R\cap K=\varnothing.
    \]
    Thus, the sets \(E_R\) escape to infinity, and  {\rm(A3)} is
    satisfied.

It remains to verify condition \eqref{capacity-condition}. Since
\(H\equiv 1\) and \(v\equiv 1\), we have
\[
\sum_{x\in E_R}
\mu(x)H(x)v^{-\frac{1}{\sigma-1}}(x)
=
\sum_{x\in E_R}\mu(x).
\]
By the definition of \(E_R\), this gives
\[
\sum_{x\in E_R}\mu(x)
=
\sum_{R-1\le n\le 2R+1}\mu(a_n).
\]
Using
\[
\mu(a_n)=c_n+c_{n-1}
\]
and
\[
c_n=(n+1)^{D-1},
\]
we obtain
\[
\sum_{x\in E_R}\mu(x)
\le
C\sum_{R-1\le n\le 2R+1} n^{D-1}
\le
C R^D.
\]
Therefore,
\[
\sum_{x\in E_R}
\mu(x)H(x)v^{-\frac{1}{\sigma-1}}(x)
=
O(R^D)
\qquad\text{as } R\to\infty.
\]
Since \(\gamma=2\), condition \eqref{capacity-condition} is satisfied whenever
\[
D\le \frac{2\sigma}{\sigma-1}.
\]
Consequently, by Theorem~\ref{T3.1}, if
\[
1<\sigma\le \frac{D}{D-2},
\]
then the only nonnegative solution of \eqref{P-ex1} is \(u\equiv 0\) in \(V\).

We now show that the main structural assumption in \cite{GuHuangSun2023},
namely condition \eqref{p0-cd}, is not satisfied in the present example.
Indeed, for the adjacent vertices \(b_n\) and \(a_n\), we have
\[
\omega_{b_n a_n}=1,
\qquad
\mu(b_n)=e^{n^2}.
\]
Therefore,
\[
\frac{\omega_{b_n a_n}}{\mu(b_n)}
=
e^{-n^2}\to 0
\qquad\text{as } n\to\infty.
\]
Hence, there is no \(p_0>1\) such that \eqref{p0-cd} holds for all adjacent
vertices \(x,y\in V\). Consequently, the main result of
\cite{GuHuangSun2023} cannot be applied to this example.

We next show that the main result of \cite{MonticelliPunzoSomaglia2025} (Theorem~\ref{T-Punzo}) cannot be applied either. In the present example, \(v\equiv 1\). Hence the weighted
annular volume condition in {\rm(H4)} reduces to
\begin{equation}\label{cd-Punzo}
\mu\bigl(B_{2R}(x_0)\setminus B_R(x_0)\bigr)
=
O\left(R^{\frac{(1+\alpha)\sigma}{\sigma-1}}\right)
\qquad\text{as } R\to\infty,
\end{equation}
where \(B_R(x_0)\) denotes the ball centered at \(x_0\) with radius \(R\), taken
with respect to the pseudo-metric \(d\). Here, \(d\), \(x_0\), and
\(\alpha\in[0,1]\) are those appearing in {\rm(H2)}--{\rm(H4)}. Let
\[
q=\frac{(1+\alpha)\sigma}{\sigma-1}.
\]
By \eqref{cd-Punzo}, there exist \(C>0\) and \(R_0>0\) such that
\[
\mu\bigl(B_{2R}(x_0)\setminus B_R(x_0)\bigr)\le C R^q,
\qquad R\ge R_0.
\]
Choose \(k_0\in\mathbb N\) such that
\[
2^{k_0}\ge R_0.
\]
Then, for every \(k\ge k_0\),
\[
\mu\bigl(B_{2^{k+1}}(x_0)\setminus B_{2^k}(x_0)\bigr)
\le
C2^{kq}.
\]
 For \(R\ge 2^{k_0}\), choose \(N\in\mathbb N\) such that
\[
2^N\le R<2^{N+1}.
\]
Then
\[
B_R(x_0)\subset B_{2^{N+1}}(x_0),
\]
and hence
\[
\begin{aligned}
\mu(B_R(x_0))
&\le \mu(B_{2^{N+1}}(x_0))\\
&\le
\mu(B_{2^{k_0}}(x_0))
+
\sum_{k=k_0}^{N}
\mu\bigl(B_{2^{k+1}}(x_0)\setminus B_{2^k}(x_0)\bigr)\\
&\le
\mu(B_{2^{k_0}}(x_0))
+
C\sum_{k=k_0}^{N}2^{kq}\\
&\le
C R^q.
\end{aligned}
\]
 In the last inequality, we used \(q>0\), \(2^N\le R\), and absorbed the fixed
quantity \(\mu(B_{2^{k_0}}(x_0))\) into the constant \(C\).  Thus,
\begin{equation}\label{ball-growth-Punzo}
\mu(B_R(x_0))\le C R^q
\qquad\text{for all sufficiently large } R.
\end{equation}

On the other hand, by the finite jump-size condition in {\rm(H2)},
\[
d(x,y)\le j
\qquad\text{whenever } x\sim y.
\]
Since \(x_0\in V\), there exists \(m\ge 0\) such that
\[
x_0\in\{a_m,b_m\}.
\]
For every \(n\ge m\), the vertices \(x_0\) and \(b_n\) can be joined by a path
with at most \(n-m+2\) edges. Indeed, if \(x_0=a_m\), one may use the path
\[
a_m,a_{m+1},\ldots,a_n,b_n,
\]
which has \(n-m+1\) edges. If \(x_0=b_m\), one may use the path
\[
b_m,a_m,a_{m+1},\ldots,a_n,b_n,
\]
which has \(n-m+2\) edges. Therefore, in both cases,
\[
d(x_0,b_n)\le j(n-m+2)\le C(n+1),
\qquad n\ge m.
\]
Consequently,
\[
b_n\in B_{C(n+1)}(x_0),
\qquad n\ge m.
\]
Then, using \eqref{ball-growth-Punzo}, we obtain, for all sufficiently large \(n\),
\[
e^{n^2}=\mu(b_n)
\le
\mu(B_{C(n+1)}(x_0))
\le
C(n+1)^q,
\]
which is impossible as \(n\to\infty\).

This contradiction shows that the annular volume condition \eqref{cd-Punzo}
cannot be satisfied for any pseudo-metric \(d\) satisfying the finite jump-size
condition in {\rm(H2)}. Therefore, {\rm(H2)} and {\rm(H4)} cannot hold
simultaneously in the present example, and the main result of
\cite{MonticelliPunzoSomaglia2025} cannot be applied.
\end{example}

\section{Sharpness of the capacitary condition}\label{sec6}

In this section, we show that the capacitary condition
\eqref{capacity-condition} in Theorem~\ref{T3.1} is sharp with respect to the
growth exponent of \(R\). More precisely, given \(\varepsilon>0\), we construct
an example with \(v\equiv1\) and a nontrivial potential \(w\) for which the
structural assumptions {\rm(A1)}--{\rm(A3)} are satisfied, but the capacitary
quantity grows like
\[
\sum_{x\in E_R}
\mu(x)H(x)v^{-\frac{1}{\sigma-1}}(x)
\asymp
R^{\frac{\gamma\sigma}{\sigma-1}+\varepsilon},
\]
and \eqref{P} admits a positive nontrivial solution.

\begin{example}

Let
\[
\sigma>1,\qquad \varepsilon>0,\qquad
D=1+\frac{2}{\sigma-1}+\varepsilon,
\]
and set
\[
c_n=(n+1)^{D-1},
\qquad n\ge0.
\]
Let
\[
V=\{a_0,a_1,a_2,\ldots\}
\]
be a countably infinite set of pairwise distinct vertices.

We define 
\[
\omega:V\times V\to[0,\infty)
\] 
by
\[
\omega_{a_n a_{n+1}}=\omega_{a_{n+1}a_n}=c_n,
\qquad n\ge0,
\]
and
\[
\omega_{xx}=0,
\qquad x\in V,
\]
with \(\omega_{xy}=0\) for all remaining pairs \((x,y)\in V\times V\).

We define the vertex measure by
\[
\mu(a_n)=c_n+c_{n-1},
\qquad n\ge0,
\]
where we use the convention
\[
c_{-1}=0.
\]
Then \(\mu(x)>0\) for every \(x\in V\). 

Moreover, each vertex has finitely many neighbors. Indeed,
\[
\{y\in V:\ y\sim a_0\}=\{a_1\},
\]
and
\[
\{y\in V:\ y\sim a_n\}=\{a_{n-1},a_{n+1}\},
\qquad n\ge1.
\]
Hence \((V,\omega,\mu)\) is locally finite. It is also connected by
construction. Indeed, for any \(m,n\ge0\), if \(m<n\), then \(a_m\) and
\(a_n\) are joined by the path
\[
a_m,a_{m+1},\ldots,a_n.
\]
The case \(n<m\) is analogous.

We now define the weight \(v\) and the potential \(w\). We take
\begin{equation}\label{vf-ex2}
v(a_n)=1,
\qquad n\ge0.
\end{equation}
Fix an integer \(n_0\ge1\). We define \(w:V\to\mathbb R\) by
\begin{equation}\label{w-f-ex2}
w(a_n)
=
-\frac{c_n-c_{n-1}}{(c_n+c_{n-1})(n+n_0)},
\qquad n\ge0.
\end{equation}

We consider problem~\eqref{P} on the weighted graph \((V,\omega,\mu)\)
introduced above, with the functions \(v\) and \(w\) defined respectively by
\eqref{vf-ex2} and \eqref{w-f-ex2}. Namely,
\begin{equation}\label{Pb-ex2}
\Delta u(a_n)+w(a_n)u(a_n)+u^\sigma(a_n)\le 0,
\qquad n\ge0.
\end{equation}

We now verify that  {\rm(A1)}--{\rm(A3)} are satisfied.

\(\bullet\) Define \(H:V\to(0,\infty)\) by
\[
H(a_n)=n+n_0,
\qquad n\ge0.
\]
Then, for all \(n\ge1\),
\[
\begin{aligned}
&\Delta H(a_n)+w(a_n)H(a_n)\\
&=
\frac{1}{\mu(a_n)}
\sum_{y\sim a_n}\omega_{a_n y}\bigl(H(y)-H(a_n)\bigr)
-\frac{c_n-c_{n-1}}{c_n+c_{n-1}}\\
&=
\frac{1}{c_n+c_{n-1}}
\left[
\omega_{a_n a_{n-1}}\bigl(H(a_{n-1})-H(a_n)\bigr)
+
\omega_{a_n a_{n+1}}\bigl(H(a_{n+1})-H(a_n)\bigr)
\right]\\
&\quad
-\frac{c_n-c_{n-1}}{c_n+c_{n-1}}\\
&=
\frac{-c_{n-1}+c_n}{c_n+c_{n-1}}
-\frac{c_n-c_{n-1}}{c_n+c_{n-1}}\\
&=0.
\end{aligned}
\]
Similarly, for \(n=0\), we have
\[
\begin{aligned}
\Delta H(a_0)+w(a_0)H(a_0)
&=
\frac{1}{\mu(a_0)}
\sum_{y\sim a_0}\omega_{a_0 y}\bigl(H(y)-H(a_0)\bigr)-1\\
&=
\frac{1}{c_0}\omega_{a_0 a_1}\bigl(H(a_1)-H(a_0)\bigr)-1\\
&=
H(a_1)-H(a_0)-1\\
&=0.
\end{aligned}
\]
Thus,
\[
\Delta H+wH=0
\qquad\text{in }V,
\]
and {\rm(A1)} is satisfied.

\(\bullet\) We next verify {\rm(A2)}. Recall that the transformed graph
associated with \(H\) is given by
\[
\omega^H_{xy}=H(x)H(y)\omega_{xy},
\qquad
\mu_H(x)=H^2(x)\mu(x),
\qquad x,y\in V.
\]
In the present example, this gives
\[
\omega^H_{a_n a_{n+1}}
=
(n+n_0)(n+1+n_0)c_n,
\qquad n\ge0,
\]
and
\[
\mu_H(a_n)
=
(n+n_0)^2(c_n+c_{n-1}),
\qquad n\ge0.
\]

Let \(\eta\in C^2([0,\infty))\) be such that
\[
0\le \eta\le1,\qquad
\eta(t)=1\quad\text{for }0\le t\le1,
\qquad
\eta(t)=0\quad\text{for }t\ge2.
\]
For \(R\ge2\), define
\[
\varphi_R(a_n)=\eta\left(\frac{n}{R}\right),
\qquad n\ge0.
\]
Then \(0\le \varphi_R\le1\), and \(\varphi_R\) has finite support. Moreover,
for every \(n\ge0\),
\[
\lim_{R\to\infty}\varphi_R(a_n)=1.
\]

Set
\[
E_R=\{a_n:\ R-1\le n\le 2R+1\}.
\]
We claim that
\begin{equation}\label{Deltavarphi-ex2}
(-\Delta_H\varphi_R)^+(a_n)
\le
\frac{C}{R^2}\mathbf 1_{E_R}(a_n),
\qquad n\ge0,\ R\ge2.
\end{equation}

First, if \(a_n\notin E_R\), then either \(n<R-1\) or \(n>2R+1\). In both
cases, the three quantities
\[
\varphi_R(a_{n-1}),\qquad \varphi_R(a_n),\qquad \varphi_R(a_{n+1})
\]
are equal whenever they are defined. Hence
\[
\Delta_H\varphi_R(a_n)=0,
\]
and \eqref{Deltavarphi-ex2} follows.

It remains to consider \(a_n\in E_R\). For \(n\ge1\), set
\[
\eta_n=\eta\left(\frac{n}{R}\right),
\]
and define
\[
A_n=
\frac{\omega^H_{a_n a_{n+1}}}{\mu_H(a_n)}
=
\frac{c_n}{c_n+c_{n-1}}\frac{n+1+n_0}{n+n_0},\qquad 
B_n=
\frac{\omega^H_{a_n a_{n-1}}}{\mu_H(a_n)}
=
\frac{c_{n-1}}{c_n+c_{n-1}}\frac{n-1+n_0}{n+n_0}.
\]
Then
\[
\Delta_H\varphi_R(a_n)
=
A_n(\eta_{n+1}-\eta_n)+B_n(\eta_{n-1}-\eta_n).
\]
Equivalently,
\begin{equation}\label{DeltaH-ex2}
\Delta_H\varphi_R(a_n)
=
A_n(\eta_{n+1}-2\eta_n+\eta_{n-1})
+
(A_n-B_n)(\eta_n-\eta_{n-1}).
\end{equation}
As in Example~\ref{ex1}, since \(\eta\in C^2([0,\infty))\), there exists
\(C>0\) such that
\[
|\eta_n-\eta_{n-1}|\le \frac{C}{R},
\qquad
|\eta_{n+1}-2\eta_n+\eta_{n-1}|\le \frac{C}{R^2}.
\]
Moreover,
\[
0\le A_n\le C
\]
and
\[
|A_n-B_n|\le \frac{C}{n},
\qquad n\ge1.
\]
Indeed, the last inequality follows from
\[
n(A_n-B_n)\to \frac{D+1}{2}
\qquad\text{as }n\to\infty.
\]
Therefore, by \eqref{DeltaH-ex2},
\[
|\Delta_H\varphi_R(a_n)|
\le
\frac{C}{R^2}+\frac{C}{nR},
\qquad n\ge1.
\]
Since \(a_n\in E_R\) and \(R\ge2\), we have  \(n\ge R/2\). Hence
\[
|\Delta_H\varphi_R(a_n)|
\le
\frac{C}{R^2},
\qquad a_n\in E_R,
\]
which yields 
\[
(-\Delta_H\varphi_R)^+(a_n)
\le
\frac{C}{R^2},
\qquad a_n\in E_R, \ R\ge2.
\]

Consequently, \eqref{Deltavarphi-ex2} holds, and  {\rm(A2)} is satisfied with
\[
\gamma=2.
\]

\(\bullet\) We finally verify {\rm(A3)}. Let \(K\subset V\) be a finite set.
Since
\[
V=\{a_0,a_1,a_2,\ldots\},
\]
there exists \(N_K\ge0\) such that
\[
K\subset \{a_0,a_1,\ldots,a_{N_K}\}.
\]
If \(R>N_K+1\), then
\[
E_R\cap K=\varnothing.
\]
Thus the sets \(E_R\) escape to infinity, and  {\rm(A3)} is satisfied.

We now compute the capacitary quantity. By the definitions of \(\mu\), \(H\), \(v\), and \(E_R\), we have 
\[ 
\begin{aligned} 
\sum_{x\in E_R} \mu(x)H(x)v^{-\frac{1}{\sigma-1}}(x) 
&= \sum_{R-1\le n\le 2R+1} \mu(a_n)H(a_n)v^{-\frac{1}{\sigma-1}}(a_n)\\ 
&= \sum_{R-1\le n\le 2R+1} (c_n+c_{n-1})(n+n_0). 
\end{aligned} 
\]
Since 
\[ 
c_n=(n+1)^{D-1}, 
\] 
we have 
\[ 
c_n+c_{n-1}\asymp n^{D-1} \qquad\text{as }n\to\infty. 
\] 
Therefore, 
\[ 
(c_n+c_{n-1})(n+n_0)\asymp n^D \qquad\text{as }n\to\infty. 
\]
It follows that 
\[ 
\sum_{x\in E_R} \mu(x)H(x)v^{-\frac{1}{\sigma-1}}(x) \asymp \sum_{R-1\le n\le 2R+1}n^D \asymp R^{D+1}. 
\]
By the choice of \(D\), 
\[ 
D+1 = 2+\frac{2}{\sigma-1}+\varepsilon = \frac{2\sigma}{\sigma-1}+\varepsilon. 
\]
Hence 
\[ 
\sum_{x\in E_R} \mu(x)H(x)v^{-\frac{1}{\sigma-1}}(x) \asymp R^{\frac{2\sigma}{\sigma-1}+\varepsilon}. 
\]
Since, in the present example, \(\gamma=2\), this gives 
\[ 
\sum_{x\in E_R} \mu(x)H(x)v^{-\frac{1}{\sigma-1}}(x) \asymp R^{\frac{\gamma\sigma}{\sigma-1}+\varepsilon}. 
\]

We now construct a positive nontrivial solution of \eqref{Pb-ex2}. Set
\[
\delta=1+\frac{2}{\sigma-1}+\frac{\varepsilon}{2},
\qquad
\rho=\frac{\varepsilon(\sigma-1)}{2},
\]
and define
\[
f(a_n)=(n+n_0)^{-\delta},
\qquad n\ge0.
\]
Since
\[
D=1+\frac{2}{\sigma-1}+\varepsilon,
\]
we have
\[
\delta<D.
\]

For \(n\ge1\), by the definition of \(\Delta_H\), we have
\[
\begin{aligned}
\Delta_H f(a_n)
&=
\frac{1}{(n+n_0)^2(c_n+c_{n-1})}
\Big[
(n+n_0)(n+n_0+1)c_n\bigl(f(a_{n+1})-f(a_n)\bigr)\\
&\qquad\qquad\qquad\qquad
+(n+n_0)(n+n_0-1)c_{n-1}\bigl(f(a_{n-1})-f(a_n)\bigr)
\Big]\\
&=
\frac{(n+n_0)^{-\delta-1}}{c_n+c_{n-1}}
\Bigg[
c_n(n+n_0+1)
\left(
\left(\frac{n+n_0}{n+n_0+1}\right)^\delta-1
\right)\\
&\qquad\qquad\qquad
+c_{n-1}(n+n_0-1)
\left(
\left(\frac{n+n_0}{n+n_0-1}\right)^\delta-1
\right)
\Bigg].
\end{aligned}
\]
Using \(c_n=(n+1)^{D-1}\), we obtain
\[
\Delta_H f(a_n)
\sim
\frac{\delta(\delta-D)}{2}(n+n_0)^{-\delta-2}
\qquad\text{as }n\to\infty.
\]
Since \(\delta<D\), the coefficient \(\delta(\delta-D)/2\) is negative.
Hence, choosing \(n_0\) sufficiently large, there exists \(C_1>0\) such that
\begin{equation}\label{DeltaH-power-ex2}
\Delta_H f(a_n)
\le
-C_1(n+n_0)^{-\delta-2},
\qquad n\ge0.
\end{equation}
Choose \(A>0\) sufficiently small so that
\begin{equation}\label{A-small-ex2}
A^{\sigma-1}n_0^{-\rho}\le \frac{C_1}{2}.
\end{equation}

We define
\[
u(a_n)=A(n+n_0)^{1-\delta},
\qquad n\ge0.
\]
Then \(u>0\) in \(V\). Moreover,
\begin{equation}\label{Afn}
\frac{u(a_n)}{H(a_n)}
=
A(n+n_0)^{-\delta}=Af(a_n).
\end{equation}
Since \(\Delta H+wH=0\), the identity
\[
\Delta u+w u
=
H\Delta_H\left(\frac{u}{H}\right)
\]
holds. Hence
\begin{equation}\label{OKK}
\Delta u+w u+u^\sigma
=
H\left[
\Delta_H\left(\frac{u}{H}\right)
+
H^{\sigma-1}\left(\frac{u}{H}\right)^\sigma
\right].
\end{equation}
By \eqref{DeltaH-power-ex2} and \eqref{Afn},
\begin{equation}\label{hot1}
\Delta_H\left(\frac{u}{H}\right)(a_n)
=
A\Delta_H f(a_n)
\le
-C_1A(n+n_0)^{-\delta-2}.
\end{equation}
On the other hand,
\[
H^{\sigma-1}(a_n)\left(\frac{u(a_n)}{H(a_n)}\right)^\sigma
=
A^\sigma (n+n_0)^{\sigma-1-\delta\sigma}.
\]
Since
\[
\sigma-1-\delta\sigma=-\delta-2-\rho,
\]
we obtain
\[
H^{\sigma-1}(a_n)\left(\frac{u(a_n)}{H(a_n)}\right)^\sigma
=
A^\sigma (n+n_0)^{-\delta-2-\rho}
\le
A^\sigma n_0^{-\rho}(n+n_0)^{-\delta-2}.
\]
Using \eqref{A-small-ex2}, this gives
\begin{equation}\label{hot2}
H^{\sigma-1}(a_n)\left(\frac{u(a_n)}{H(a_n)}\right)^\sigma
\le
\frac{C_1}{2}A(n+n_0)^{-\delta-2}.
\end{equation}
Therefore, by \eqref{hot1} and \eqref{hot2},
\[
\Delta_H\left(\frac{u}{H}\right)(a_n)
+
H^{\sigma-1}(a_n)\left(\frac{u(a_n)}{H(a_n)}\right)^\sigma
\le0,
\qquad n\ge0.
\]
Since \(H>0\) in \(V\), it follows from \eqref{OKK} that
\[
\Delta u(a_n)+w(a_n)u(a_n)+u^\sigma(a_n)\le0,
\qquad n\ge0.
\]
Thus \(u\) is a positive nontrivial solution of \eqref{Pb-ex2}.
\end{example}

\bigskip

\noindent{\bf Author Contributions} M.J. and B.S. wrote the manuscript.

\noindent{\bf Funding} The first author is supported by the Ongoing Research Funding Program, (ORF-2026-57), King Saud University, Riyadh, Saudi Arabia.

\noindent{\bf Data Availability} No datasets were generated or analysed during the current study

\medskip 

\noindent{\bf Declarations}

\medskip 
\noindent{\bf Ethical Approval} Not applicable.

\noindent{\bf Conflict of interest}  On behalf of all authors, the corresponding author states that there is no conflict of interest.

\end{document}